
\documentclass{amsart}

\usepackage[utf8x]{inputenc}
\usepackage[T1]{fontenc}
\usepackage[english,francais]{babel}

%
%
\newtheorem{theorem}{Theorem}
\newtheorem{lemma}[theorem]{Lemma}
\newtheorem{prop}[theorem]{Proposition}
\newtheorem{cor}[theorem]{Corollary}

\newtheorem{remark}{\it Remark\/}


\newcommand{\R}{\mathbb R}

\newcommand{\cF}{\mathcal F}

\newcommand{\cZ}{\mathcal Z}
\newcommand{\ve}{\varepsilon}

\usepackage{amsmath,amsfonts,amssymb,epsfig}
\usepackage{delarray}
\usepackage{graphicx}
\usepackage{url}

\def\latex/{{\protect\LaTeX}}
\def\latexe/{{\protect\LaTeXe}}
\def\bibtex/{{Bib\protect\TeX}}



\newcommand{\C}{\ensuremath{\mathbb C}}
\newcommand{\K}{\ensuremath{\mathbb K}}

\newcommand{\bbR}{\mathbb{R}} 

\newcommand{\cB}{\mathcal{B}}
\newcommand{\cC}{\mathcal{C}}
\newcommand{\cD}{\mathcal{D}}
\newcommand{\FF}{\mathcal{F}}

\newcommand{\cL}{\mathcal{L}}


%
%
%
%

\begin{document}
\selectlanguage{english}
\title{Deformation of  singular foliations, 1: Local deformation cohomology}
\author{%
Philippe Monnier and Nguyen Tien Zung}
\address{%
Institut de Mathématiques de Toulouse, UMR 5219 CNRS, Université Toulouse III \\
E-mail:  philippe.monnier@math.univ-toulouse.fr, tienzung@math.univ-toulouse.fr } \maketitle
\thispagestyle{empty}
%
\selectlanguage{english}
\begin{abstract}{%
In this paper we introduce the notion of \textit{deformation cohomology} for singular foliations and related objects (namely integrable differential forms and Nambu structures), and study it in the local case, i.e., in the neighborhood of a point. 
}\end{abstract}

\section{Introduction}

This is the first in our series of papers on the problem of deformations of singular foliations
in the sense of Stefan-Sussmann \cite{Stefan-Foliation1974,Sussmann-Foliation1973}. In this paper we will concentrate on the local case, i.e., germs of singular foliations (analytic, smooth or formal), and study the deformation cohomology which governs their infinitesimal deformations. In the subsequent papers, we will discuss the global deformation cohomology, the rigidity problem of singular foliations, and so on. 

In most deformation theories of objects of some given category (e.g., Lie algebras, complex structures, group actions, etc.), one can define a \textit{cohomology group} which controls \textit{infinitesimal deformations}, and other higher cohomology groups which may play the role of obstructions to integrating these infinitesimal deformations into true deformations. One wants to 
do the same thing for singular foliations. In order to do that, one first needs to ``algebraize'' or ``tensorize'' them, 
turn them into objects which can be manipulated with algebraic operations. Our approach to algebraization of singular foliations is via \textit{integrable differentiel forms} and their dual multi-vector fields, called \textit{Nambu structures} (see, e.g., \cite[Chapter 6]{DuZu-PoissonBook2005} and \cite{MiZu-Commuting2014}). 

We note that some authors,
including Androulidalik, Skandalis and Zambon, consider locally finitely-generated involutive modules of vector fields and Lie algebroids as proxies for singular foliations, and obtain many interesting results with this approach, see, e.g., \cite{AndrouZambon-Foliations2016,AndrouSkandalis-Holonomy2009}. Our approach is different from theirs. We believe that our approach is more directly related to the problem of deformations of singular foliations of a given dimension, and that the two approaches are complementary to each other and can be combined for the study of various problems concerning singular foliations.

We refer to \cite[Section 1.5 and Chapter 6]{DuZu-PoissonBook2005} for a brief introduction to singular foliations and some basic results, including the \textit{Stefan-Sussmann theorem} \cite{Stefan-Foliation1974,Sussmann-Foliation1973}, which says that a \textit{distribution} $\cD$
on a manifold $M$ generated by a family $\cC$ of (smooth, analytic or formal) vector fields on $M$ (i.e., at every point $x \in M$ the corresponding tangent subspace $\cD_x$ of $\cD$ is spanned by the vectors $\{ X(x), X\in \cC \}$) 
is \textit{integrable}, i.e., is the \textit{tangent distribution} of a singular foliation à la Stefan-Sussmann, if and only if $\cD$ is invariant with respect to $\cC$, i.e. the local flow of every element of $\cC$ preserves $\cD$. 

In the case when $\dim \cD_x$ does not depend on $x$ then $\cD$ is called a \textit{regular distribution}, and in this case its integrability condition (i.e., $\cD$ is the tangent distribution of a \textit{regular foliation}) is equivalent to the Frobenius \textit{involutivity condition}: the Lie bracket of any two vector fields tangent to $\cD$ is again tangent to $\cD$. In the singular case (when $\dim \cD_x$ is not constant but drops on a subset called the \textit{singular set}), the involutivity condition is still necessary but not sufficient. A simple
counter example is the distribution $\cD$ on $\bbR^2$ given by {$\cD_{(x,y)} = Span(\partial/\partial x)$ if $x \leq 0$ and  $\cD_{(x,y)} = 
Span(\partial/\partial x, \partial/ \partial y)$ if $x > 0$}, which is involutive but not integrable. However, according to a theorem of Hermann \cite{Hermann-Foliation1963}, for locally \textit{finitely generated} disributions (i.e., the family $\cC$ of vector fields which generates $\cD$ can be chosen to be finite, at least locally)
the involutivity condition is sufficient for integrability. To avoid pathologies, we will be mainly interested in singular foliations whose tangent distributions are locally finitely generated.

For regular foliations, the problem of stability (rigidity) was studied by Reeb \cite{Reeb-Foliation1952} and Thurston 
\cite{Thurston-Foliation1974}, among other authors, and a
deformation cohomology (which governs infinitesimal deformations) was defined by Heitsch \cite{Heitsch-Cohomology1975}. We want to extend these theories of stability and infinitesimal deformations of foliations to the case of singular foliations. The motivation is clear: similarly to the fact that 
most functions in practice admit singular points, most foliations that we encounter (e.g., in geometric control theory, sub-Riemannian geometry, dynamical systems, symplectic and Poisson geometry, algebraic geometry, etc.) are in fact \textit{singular}, and many 
interesting things (including global invariants)
are localized at singularities, so one should include singularities in the study.

\section{Tensorization of singular foliations}

\subsection{Integrable differential forms and Nambu structures}

Let us recall that a differential $p$-form $\omega$ on a manifold $M^n$ ($0 \leq q \leq n$) is called \textit{\textbf{integrable}} if it satisfies the following
two conditions for any $(p-1)$-vector field $A$:
\begin{equation}\label{eqn:integrable}
1)\ \omega \wedge i_A \omega = 0 \quad \text{and} \quad
2)\ 
d\omega \wedge i_A \omega = 0.
\end{equation}
In particular, when $p=1$ then the first condition is trivial ($\omega \wedge \omega = 0$ for any 1-form $\omega$), and the second condition is the usual integrability condition for a differential 1-form 
$\omega \wedge d\omega = 0$. If $\omega$ is an integrable $p$-form and $z$ is a regular point of $\omega$, i.e.
$\omega (z) \neq 0$, then in a neighborhood of $z$
there is a local coordinate system $(x_1,\hdots,x_n)$
in which 
\begin{equation}
\omega = f dx_1 \wedge \hdots \wedge dx_{p},
\end{equation}
where $f$ is some function such that $f(z) \neq 0$ . 
The kernel of an integrable $p$-form $\omega$ near a point $z$ where $\omega(z) \neq 0$
is an involutive distribution of corank $p$ which gives a codimension $p$ foliation outside singular points.

A \emph{Nambu structure} of order $q$ on a manifold $M$ is a $p$-vector field $\Lambda$ on $M$ 
such that its contraction 
\begin{equation}
\omega = i_\Lambda \Omega
\end{equation}
with a (local) volume form  $\Omega$
is a (local) differential $p$-form (where $p+q = n$
is the dimension of the manifold). An equivalent 
condition (for smooth or analytic) Nambu structures is as follows: A (smooth or analytic) $q$-vector field $\Lambda$ is a Nambu structure if and only 
if near every point  $x$ such that
$\Lambda(x) \neq 0$ there is a local coordinate system $(x_1,\dots,x_n)$ such that
\begin{equation}\label{eqn:Nambu}
\Lambda = f \frac{\partial}{\partial x_{p+1}}\wedge\ldots\wedge\frac{\partial}{\partial x_n}\,.
\end{equation}

In fact, by a change of coordinates, one can put $f=1$ in Formula \eqref{eqn:Nambu}. This formula cannot be used in the singular formal case, so in order to define a formal singular Nambu structure one has to use Condition \eqref{eqn:integrable} 
(applied to the dual differential form) instead 
(see, e.g.,\cite{DuZu-PoissonBook2005}).

\subsection{From singular foliations to Nambu structures and back}

Given a singular foliation $\cF$, 
we will say that a Nambu structure $\Lambda$ is a \textbf{\textit{tangent Nambu structure}} to a $\cF$ if $$\mathrm{codim}(S(\FF)\setminus S(\Lambda)) \geq 2$$ 
and near each point
$x \notin S(\Lambda)\cup S(\FF)$ there is a local coordinate system in which
$\Lambda = \partial/\partial x_1 \wedge \hdots \wedge \partial/\partial x_p$
and $\cF$ is generated by $\partial/\partial x_1,\dots, \partial/\partial x_p$. Here $S(\Lambda)$ denotes the singular set of $\Lambda$, i.e. the set of points where $\Lambda$ vanishes,
and $S(\FF)$ denotes the set of singular points of $\cF$, i.e., the set of points where the dimension of the tangent distribution drops. If, moreover,  $\mathrm{codim}\big(S(\Lambda)\setminus S(\FF)\big)\geq 2$, 
and $\Lambda$  is without multiplicity in the sense that $\Lambda$ can't be written as  $\Lambda=f^2\Lambda',$  where $f$ is a function which vanishes at the origin, 
then we say that $\Lambda$ is an \textit{\textbf{associated Nambu structure}} to $\cF.$

The above definition works well in the analytic and formal categories, and also in the smooth category under some finiteness conditions: the local associated Nambu structure exists and is unique up to multiplication by a non-vanishing function. (See \cite{MiZu-Commuting2014} for the details). It can be constructed as follows.
Take $p$ local vector fields $X_1,\hdots, X_p$ which are tangent to $\mathcal{F}$ and which are linearly independent almost everywhere, where $p$ is the dimension
of $\mathcal{F}$. Put
\begin{equation}
\Pi=X_1\wedge\hdots\wedge X_q,
\end{equation}
then factorize $\Pi$ as $\Pi=h\Lambda$, where $\mathrm{codim}S(\Lambda)\geq 2.$ If $\mathrm{codim}S(\FF)\geq 2$ then $\Lambda$ is an associated Nambu structure of $\FF$.
If $\mathrm{codim}S(\FF)=1$, then we find a reduced function $s$ such that $S(\FF)=\{s=0\}$, and 
$s\Lambda$ is an associated Nambu structure of $\FF$. 

For example, let $\FF$ be the codimension-1 quadric foliation on  $\mathbb{R}^3$ or $\mathbb{C}^3$ with leaves 
$\{x^2+y^2+z^2=const\}$. Take two tangent vector fields $X=y\frac{\partial}{\partial z}-z\frac{\partial}{\partial y}$, $Y=z\frac{\partial}{\partial x}-x\frac{\partial}{\partial z}$, and put 
$\Pi=X\wedge Y=z\left(x\frac{\partial}{\partial y}\wedge\frac{\partial}{\partial z}+ y\frac{\partial}{\partial z}\wedge\frac{\partial}{\partial x}+z\frac{\partial}{\partial x}\wedge\frac{\partial}{\partial y}\right).$
Then $\Lambda=\dfrac{\Pi}{z}$ is an associated Nambu structure of $\FF$.

Conversely, given a Nambu structure $\Lambda$, consider the set (or the sheaf), denoted by $CIT(\Lambda)$, of all (local) \textit{\textbf{conformally invariant tangent}} (CIT) vector fields of $\Lambda$, i.e. vector fields $X$ satisfying  
\begin{equation}
X\wedge\Lambda=0 \quad \text{and} \quad
\cL_X\Lambda=g\Lambda \;\text{ for some function } g.
\end{equation}
Then one checks easily that $CIT(\Lambda)$ generates an integrable distribution. The corresponding foliation is denoted by $\cF_\Lambda$ and called the \textit{\textbf{associated foliation}} of $\Lambda$.
If $f$ is a non-vanishing function then 
$\cF_{f\Lambda} =\cF_\Lambda$.

The above forward and backward functors give an ``almost 
one to one'' correspondence between local singular foliations and local Nambu structures (up to multiplication by non-vanishing functions) under some mild conditions on the singularities. (See \cite{MiZu-Commuting2014} for precise statements in the holomorphic case). This justifies our use of Nambu structures as a proxy for singular foliations.  

Nambu structures will allow us to study deformations of singular foliations. They also allow us to talk about 
(quasi)homogeneous singular foliations (i.e., foliations associated to linear and/or
(quasi)homogeneous Nambu structures in some 
coordinate system), and study the local normalization problem near a singular point. See \cite{Zung-Nambu2013} and references therein for recent results on the problem of local linearization of singular foliations and of Nambu structures.
Many operations with singular foliations, e.g., pull-back and reduction, can also be done naturally via
associated Nambu structures and integrable differential forms.

Globally, on a manifold, we have a sheaf of local tangent Nambu structures,
which is a locally free module of rank one over the ring of functions. In other words, this sheaf is a line bundle, which is nothing but the 
\textit{anti-canonical line bundle}
of the foliation. Since this line bundle may be twisted and 
does not necessarily admit a global section, we do not necessarily have a global Nambu structure associated to a singular foliation, only local ones.
This will be discussed in more detail in our subsequent paper.

\section{Infinitesimal deformations and deformation cohomologies}

Let $\omega$ be an integrable differential $q$-form 
on a $n$ dimensional manifold $M$. By an \textit{\textbf{infinitesimal deformation}} of $\omega$ we mean a $q$-form $\eta$ such that $\omega+\ve\eta$ is
\textit{integrable modulo $\ve^2$}, where $\ve$
is a formal infinitesimal parameter. In other words,
\begin{equation}
(\omega+ \ve\eta) \wedge i_A (\omega+\ve\eta) \equiv 0 \pmod {\ve^2}\quad \text{and} \quad
d(\omega+ \ve\eta) \wedge i_A (\omega+\ve\eta) \equiv 0 \pmod {\ve^2}
\end{equation}
for any $(q-1)$-vector field $A$. Since $\omega$ is integrable, the above conditions are equivalent to
the following family of linear equations on $\eta$
and $d\eta$:
\begin{equation}
i_A\omega\wedge\eta+i_A\eta\wedge\omega=0 \quad
\text{and} \quad i_A\omega\wedge d\eta+i_A\eta\wedge d\omega=0, \;\forall (q-1)\text{-vector fields }A.
\end{equation} 

If $\eta=\cL_X\omega = i_X d \omega + di_X\omega$, where $X$ is a vector field, then $\eta$ is called a \emph{\textbf{trivial deformation}} of $\omega$ (because it is obtained by the pull-back of $\omega$ with respect to the infinitesimal flow of $X$, i.e. $\omega$ is sent to
$\omega + \varepsilon \eta$  modulo $\ve^2$ by such a flow). Denote by $\cZ(\omega)$ the set 
of infinitesimal deformations of $\omega$, and by $\cB(\omega)$ the set of trivial deformations of $\omega$. It is clear that $\cB(\omega)$ is a vector subspace of $\cZ(\omega)$, and we can define the following quotient vector space, which we denote by 
$DH (\omega)$ and call the \textit{\textbf{deformation cohomology}} of $\omega$:
\begin{equation}
DH (\omega)=\frac{\cZ(\omega)}{\cB(\omega)}.
\end{equation}

Suppose that $\Omega$ is a volume form and $\Lambda$ is a Nambu structure of degree $q$ on $M$. 
The set of \textit{\textbf{infinitesimal deformations}} $\cZ(\Lambda)$ of $\Lambda$ consists of all $q$-vector fields $\Pi$ such that $i_{\Pi}\Omega$ is a infinitesimal deformation of $i_{\Lambda}\Omega$. In other words, $\Pi\in\cZ(\Lambda)$ means that $\Lambda+\ve\Pi$ is a Nambu structure modulo $\ve^2$. If $\Pi=\cL_{X}\Lambda$ for some vector field $X$, then $\Pi$ is called a trivial deformation of $\Lambda$.  We denote by $\cB(\Lambda)$ the set of trivial deformations of $\Lambda$.  The \emph{\textbf{deformation cohomology}} $DH(\Lambda)$
of $\Lambda$ is defined as follows:
\begin{equation}
DH(\Lambda)=\frac{\cZ(\Lambda)}{\cB(\Lambda)}.
\end{equation}

The definition of $\cZ(\Lambda)$ does not depend on the volume form $\Omega$. Usually, $DH(\omega)$ is an infinite dimensional vector space even when $\omega$ is regular. 
For example, if $\omega=dx_1\wedge\ldots\wedge dx_p$,  with $1\leq p\leq n-1$, then $\dim DH(\omega)=+\infty$ because  when a multiform is disturbed it can lose some properties (e.g. the closeness). Here, for any function $f$, the $p$-form $f\omega$ is integrable but not necessarily closed so, does not belong to $\cB(\omega)$. Nevertheless, we have the following proposition.
\begin{prop}
If $\displaystyle \Lambda=\partial/\partial x_1 \wedge \hdots \wedge \partial/\partial x_q$ in $\K^n$, then
$DH(\Lambda)=\{0\}$. 
\end{prop}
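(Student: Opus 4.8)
The plan is to dualize the statement and reduce it to a Poincar\'e-lemma computation. Fix the volume form $\Omega=dx_1\wedge\dots\wedge dx_n$ and set $\omega=i_\Lambda\Omega$, which up to sign is the constant closed $p$-form $dx_{q+1}\wedge\dots\wedge dx_n$ with $p=n-q$. By definition $\Pi\in\cZ(\Lambda)$ if and only if $\eta:=i_\Pi\Omega$ lies in $\cZ(\omega)$, while $\cB(\Lambda)$ corresponds under $\Pi\mapsto i_\Pi\Omega$ to the forms $i_{\cL_X\Lambda}\Omega$. Since $\Pi\mapsto i_\Pi\Omega$ is a linear isomorphism between $q$-vector fields and $p$-forms, I would work entirely with $\eta$ and the two linearized equations for an infinitesimal deformation; as $d\omega=0$ these read $i_A\omega\wedge\eta+i_A\eta\wedge\omega=0$ and $i_A\omega\wedge d\eta=0$ for every $(p-1)$-vector field $A$.

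First I would solve the algebraic (first) equation. Writing $\eta=\sum_{|K|=p}\eta_K\,dx_K$ and letting $A$ run over the $\partial_J$, $|J|=p-1$, this equation is exactly the linearization at $\omega$ of the decomposability (Pl\"ucker) relations $i_A\alpha\wedge\alpha=0$. Since $\omega=dx_{I_0}$ with $I_0=\{q+1,\dots,n\}$ is a smooth nonzero point of the cone of decomposable $p$-forms, whose tangent space there is spanned by the $dx_K$ with $|K\cap I_0|\ge p-1$, the equation holds if and only if $\eta_K=0$ for every $K$ at distance $\ge 2$ from $I_0$; concretely, testing $A=\partial_J$ with $J\not\subset I_0$ forces each such coefficient to vanish directly. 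Hence $\eta=g\,\omega+\sum_{a\le q,\,b\in I_0}c_{ab}\,dx_{(I_0\setminus b)\cup a}$ for some functions $g$ and $c_{ab}$.

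Next I would feed this into the second equation. Testing against $A=\partial_{I_0\setminus b}$ shows that $i_A\omega$ ranges over the $1$-forms $dx_b$, $b\in I_0$, so the condition becomes $dx_b\wedge d\eta=0$ for all $b\in I_0$, i.e. $d\eta$ lies in the ideal generated by $\omega$. Computing $d\eta$ from the previous display, the term $dg\wedge\omega$ and the terms $\partial_b c_{ab}\,dx_b\wedge dx_{(I_0\setminus b)\cup a}$ automatically lie in this ideal, whereas the remaining terms $\partial_j c_{ab}\,dx_j\wedge dx_{(I_0\setminus b)\cup a}$ with $j\le q$, $j\ne a$ do not; requiring the total coefficient of each offending $(p+1)$-form $dx_{\{a,j\}\cup(I_0\setminus b)}$ to vanish yields, up to a uniform sign, precisely the relations $\partial_j c_{ab}=\partial_a c_{jb}$ for all $a,j\le q$ and each fixed $b\in I_0$. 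In particular $g$ is left entirely unconstrained.

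Finally I would exhibit the trivializing vector field. A direct computation gives $\cL_X\Lambda=-\big(\sum_{i\le q}\partial_i X^i\big)\Lambda-\sum_{i\le q,\,k>q}(\partial_i X^k)\,\partial_{(\{1,\dots,q\}\setminus i)\cup k}$, whose dual has exactly the shape of the $\eta$ above, with the rescaling function matching $g$ and the off-diagonal coefficients matching the $c_{ab}$; crucially the tangent components $X^1,\dots,X^q$ feed only into $g$ and the transverse components $X^{q+1},\dots,X^n$ only into the $c_{ab}$, so the two may be chosen independently. For each fixed transverse index $b$ the relations $\partial_j c_{ab}=\partial_a c_{jb}$ are exactly the compatibility conditions of the Poincar\'e lemma in the variables $x_1,\dots,x_q$, so I can solve $\partial_a X^b=\mp c_{ab}$ for a function $X^b$; choosing $X^1$ to absorb the free function $g$ and the other tangent components to be zero produces $X$ with $\cL_X\Lambda=\Pi$. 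Thus $\cZ(\Lambda)=\cB(\Lambda)$ and $DH(\Lambda)=\{0\}$. The main effort will go into the sign-and-index bookkeeping of the second step and into verifying that the Poincar\'e lemma used at the end is available; the latter holds on a neighborhood of the origin in each of the formal, analytic, and smooth categories, so the argument is uniform in $\K$.
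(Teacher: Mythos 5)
Your proof is correct, and it is essentially the paper's own argument transported to the dual picture of forms: your coefficients $c_{ab}$ are the paper's $f_k^{(i)}$, your relations $\partial_j c_{ab}=\partial_a c_{jb}$ are exactly the paper's commutation relations $[X_i,X_j]=0 \pmod{\varepsilon^2}$ for the contracted vector fields $X_i = \partial/\partial x_i + \varepsilon\sum_{k>q}f_k^{(i)}\partial/\partial x_k$, both proofs conclude with the same Poincar\'e lemma in the leaf variables and the same transverse trivializing vector field, and your absorption of the free function $g$ by the tangential component $X^1$ is the paper's opening reduction killing the $f\Lambda$-term via $X=\bigl(\int f\,dx_1\bigr)\partial/\partial x_1$. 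The only substantive difference is that the paper obtains decomposability and the commutation relations by invoking the structure of Nambu tensors modulo $\varepsilon^2$ (the wedge $X_1\wedge\hdots\wedge X_q$ recovering $\Lambda+\varepsilon\Pi$), whereas you derive the equivalent facts by hand from the linearized Pl\"ucker and Frobenius equations on $\eta=i_\Pi\Omega$, which makes your version more self-contained at the cost of heavier index bookkeeping.
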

\begin{proof}
Consider an infinitesimal deformation $\Lambda + \varepsilon\Pi$ of $\Lambda$. 
Let us first remark that if $\Pi = f \partial/\partial x_1 \wedge \hdots \wedge \partial/\partial x_q$ for some function $f$ then, we have
$\Pi = \cL_{X}\Lambda$ where $X=\big(\int f dx_1\big)\partial/\partial x_1$. 

Therefore, we can assume that in the deformation $\Lambda + \varepsilon\Pi$, the tensor $\Pi$ does not contain a term of type $f\Lambda$.
Consider for $i=1,\hdots, q$ the Hamiltonian vector fields
$$
X_i =(-1)^{p-i} i_{ dx_1\wedge\hdots   
 \widehat{dx_i} \hdots\wedge dx_q } \big( \Lambda + \varepsilon\Pi \big)\,.
$$

We have
$$
X_i = \partial/\partial x_i + \varepsilon \sum_{k=q+1}^n f_k^{(i)} \partial/\partial x_k\,.
$$
Since $\Lambda + \varepsilon\Pi$ is a Nambu tensor modulo $\varepsilon^2$ we have
$X_1\wedge\hdots\wedge X_q= \Lambda + \varepsilon\Pi \, (\mathrm{mod}\, \varepsilon^2)$ and
$\displaystyle [X_i , X_j]=0 \, (\mathrm{mod}\, \varepsilon^2)$.

This last relation then gives $\displaystyle \frac{\partial f_k^{(i)}}{\partial x_j} = \frac{\partial f_k^{(j)}}{\partial x_i}$ for any $i\neq j$ and $k>q$. 
Therefore, by Poincaré's Lemma, there exist functions $F_k$ (for any $k>q$) such that $\displaystyle f_k^{(i)}= \frac{\partial F_k}{\partial x_i}$ for every $i=1,\hdots, q$ and $k>q$.

Now, we put $\displaystyle  X=-\sum_{k=q+1}^n F_k \partial/\partial x_k$. We then have
$$
X_1\wedge\hdots\wedge X_q = \Lambda + \varepsilon \mathcal{L}_X\Lambda \, (\mathrm{mod}\, \varepsilon^2)
$$
and the flow $\phi_X^\varepsilon$ sends $\Lambda$ to $\Lambda + \varepsilon \Pi \, (\mathrm{mod}\, \varepsilon^2)$.

\end{proof}

\subsection{Deformation cohomology of singular foliations}

Let $\Lambda$ be a (germ of a) local (smooth, analytic, or formal) Nambu structure of degree $q$ near the origin in $\K^n$ , and $\cF_\Lambda$ be the foliation generated by $\Lambda$. The set $\cB(\cF_\Lambda)$ of trivial deformations of $\cF_\Lambda$ consists of all (smooth, analyti, or formal) germs of $q$-vector fields $\Pi$
which can be written as  
\begin{equation}
\Pi=\cL_X\Lambda+f\Lambda
\end{equation}
where $X$ is local a vector field and $f$ is a local function neqr the origin in
$\K^n$.
The \emph{\textbf{deformation cohomology}} 
$DH(\cF_\Lambda)$ of the foliation $\cF_\Lambda$ is defined as follows:  
\begin{equation}
DH(\cF_\Lambda)=\frac{\cZ(\Lambda)}{\cB(\cF_\Lambda)}.
\end{equation}
Similarly, if $\cF_\omega$ is generated by a local integrable $p$-form $\omega$, then the set of trivial deformations 
$\cB(\cF_\omega)$ consists of all $p$-forms of the type $\cL_X\omega+f\omega$, where $X$ denotes a local vector field
and $f$ denotes a local function. The \textbf{\textit{deformation cohomology}} of $\cF_\omega$ can then be defined as follows:
\begin{equation}
DH(\cF_\omega)=\frac{\cZ(\omega)}{\cB(\cF_\omega)}.
\end{equation}
The following lemma, whose proof is straightforward, says that the cohomology of singular foliations doesn't depend on the choice of its associated Nambu structures or integrable forms:  

\begin{lemma} Let $\Lambda$ be a (local or global) Nambu structure on a manifold. Suppose that $\Omega$ is a (local or global) volume form and $u$ is an invertible function on the manifold. If $\omega=i_\Lambda\Omega$ then
\begin{equation}
DH(\cF_\Lambda) \cong DH(\cF_\omega)\quad \text{and} \quad DH(\cF_\Lambda) \cong DH(\cF_{u\Lambda}).
\end{equation}
\label{lem:isomfonctionpres}
\end{lemma}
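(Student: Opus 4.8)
The plan is to realize each of the two isomorphisms as the map induced on the quotient $\cZ/\cB$ by a natural linear isomorphism defined already at the level of the spaces of infinitesimal deformations, and then to verify that this isomorphism carries the space of trivial deformations exactly onto the corresponding target space of trivial deformations. The whole argument rests on two elementary identities: the fact that the Lie derivative is a derivation with respect to contraction, and the Leibniz rule for $\cL_X$ applied to a function times a multivector.

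For the first isomorphism, I would use the contraction map $\phi\colon \Pi \mapsto i_\Pi\Omega$, which is an isomorphism from $q$-vector fields to $p$-forms precisely because $\Omega$ is a volume form. By the very definition of $\cZ(\Lambda)$ (namely $\Pi\in\cZ(\Lambda)$ if and only if $i_\Pi\Omega\in\cZ(\omega)$, with $\omega=i_\Lambda\Omega$), the map $\phi$ restricts to an isomorphism $\cZ(\Lambda)\xrightarrow{\ \sim\ }\cZ(\omega)$. The point requiring verification is that $\phi\big(\cB(\cF_\Lambda)\big)=\cB(\cF_\omega)$. For this I would invoke the derivation identity $\cL_X(i_\Lambda\Omega)=i_{\cL_X\Lambda}\Omega+i_\Lambda(\cL_X\Omega)$ together with $\cL_X\Omega=(\mathrm{div}_\Omega X)\,\Omega$, which yields
\[ \phi(\cL_X\Lambda+f\Lambda)=\cL_X\omega+(f-\mathrm{div}_\Omega X)\,\omega. \]
Since $f$ ranges over all (germs of) functions and $\mathrm{div}_\Omega X$ is itself a function, the coefficient $f-\mathrm{div}_\Omega X$ ranges over all functions, so the image is exactly $\cB(\cF_\omega)$. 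Passing to quotients gives $DH(\cF_\Lambda)\cong DH(\cF_\omega)$.

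For the second isomorphism I would use multiplication by the invertible function $u$, that is $\psi\colon\Pi\mapsto u\Pi$, whose inverse is multiplication by $u^{-1}$. Because multiplying a $p$-form by a function preserves the linear integrability equations defining $\cZ(\omega)$ (the factor $u$ and its differential $du$ combine so as to leave those equations valid, using the first equation to kill the $du$-terms), the map $\psi$ carries $\cZ(\Lambda)$ isomorphically onto $\cZ(u\Lambda)$. To match the trivial deformations I would use the Leibniz rule $\cL_X(u\Lambda)=(Xu)\Lambda+u\,\cL_X\Lambda$, which gives
\[ \psi(\cL_X\Lambda+f\Lambda)=\cL_X(u\Lambda)+\frac{uf-Xu}{u}\,(u\Lambda). \]
Here invertibility of $u$ is what allows me to rewrite $(uf-Xu)\Lambda$ as a function multiple of $u\Lambda$, and again, as $f$ runs over all functions the coefficient $(uf-Xu)/u$ runs over all functions, so $\psi\big(\cB(\cF_\Lambda)\big)=\cB(\cF_{u\Lambda})$. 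Passing to quotients yields $DH(\cF_\Lambda)\cong DH(\cF_{u\Lambda})$.

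I do not expect a genuine obstacle here—this is why the statement can be called straightforward—but the one point demanding care is the bookkeeping of the scalar correction terms, namely $\mathrm{div}_\Omega X$ in the first case and $Xu/u$ in the second. The argument works precisely because these extra terms are absorbed into the free function $f$ that enters the definition of $\cB(\cF_\Lambda)$, which is exactly what makes surjectivity onto the target space of trivial deformations automatic; the invertibility of $u$ is essential in the second case, since it is needed to solve for the prescribed coefficient.
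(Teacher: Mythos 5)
Your proof is correct, and it is exactly the straightforward verification the paper has in mind (the paper states the lemma without proof, remarking only that the proof is straightforward): contraction with $\Omega$ and multiplication by $u$ induce the isomorphisms, with the scalar correction terms $\mathrm{div}_\Omega X$ and $Xu/u$ absorbed into the free function multiplying the Nambu structure in the definition of $\cB(\cF_\Lambda)$. The key identities $\cL_X(i_\Lambda\Omega)=i_{\cL_X\Lambda}\Omega+i_\Lambda(\cL_X\Omega)$ and $\cL_X(u\Lambda)=(Xu)\Lambda+u\,\cL_X\Lambda$ are applied correctly, and invertibility of $u$ enters precisely where you say it does.
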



\subsection{Nambu structures of order 0 (functions)}

Suppose that  $\Lambda=f$ is a (smooth or analytic) function (i.e. a $0$-vector field) in a neighborhood of 0 in $\K^n$ 
($\K$ is $\R$ or $\C$). 

We denote by $\mathcal{O}_n$ the vector space of germs at 0 of (smooth or analytic) functions on $\K^n$ 
 and $\mathfrak{X}(\K^n)$ the vector space of germs at 0 of (smooth, analytic) vector fields. 
\begin{theorem}
With the notations above, we have
\begin{align}
DH(f)&=\frac{\mathcal{O}_n}{\{X(f)\, | \, X\in\mathfrak{X}(\K^n)\}}=\frac{\mathcal{O}_n}{\left\langle\frac{\partial f}{\partial_{x_1}},\ldots,\frac{\partial f}{\partial_{x_n}}\right\rangle},\\
DH(\cF_f)&=\frac{\mathcal{O}_n}{\{X(f)+ cf\, | \, X\in\mathfrak{X}(\K^n),c\in\mathcal{O}_n\}}=\frac{\mathcal{O}_n}{\left\langle f,\frac{\partial f}{\partial_{x_1}},\ldots,\frac{\partial f}{\partial_{x_n}}\right\rangle}.
\end{align}
In particular, $\mathrm{dim} DH(f)=\mu(f)$ (\textit{the so-called Milnor number}) and $\mathrm{dim} DH(\cF_f)=\tau(f)$ (\textit{the so-called Tjurina number}).\\
\end{theorem}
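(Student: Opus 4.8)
The plan is to unwind the definitions of $\cZ$ and $\cB$ in the degenerate order $q=0$, where a Nambu structure is just a function and a deformation is just another function, and then to recognize the two resulting quotients as the classical Milnor and Tjurina algebras.

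First I would determine $\cZ(f)$. Fixing a volume form $\Omega$, we have $\omega:=i_f\Omega=f\,\Omega$, and a candidate deformation $\Pi=g\in\mathcal{O}_n$ gives $i_g\Omega=g\,\Omega$; both are $n$-forms on the $n$-dimensional space $\K^n$ (so $p=n$). The key observation is that for an $n$-form the integrability conditions \eqref{eqn:integrable} are vacuous: here $A$ is an $(n-1)$-vector field, so $i_A\omega$ is a $1$-form and both $\omega\wedge i_A\omega$ and $d\omega\wedge i_A\omega$ have degree $\geq n+1$, hence vanish identically. The same degree count applies to $\omega+\ve\,i_g\Omega$, so every $g$ makes $f+\ve g$ a Nambu structure modulo $\ve^2$. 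Thus $\cZ(f)=\mathcal{O}_n$, and since only top-degree forms intervene this is manifestly independent of the choice of $\Omega$.

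Next I would compute the trivial deformations. For a $0$-vector field the Lie derivative is the directional derivative, $\cL_X f=X(f)$; writing $X=\sum_{i=1}^n a_i\,\partial/\partial x_i$ with $a_i\in\mathcal{O}_n$ arbitrary gives $X(f)=\sum_i a_i\,\partial f/\partial x_i$. As $X$ ranges over $\mathfrak{X}(\K^n)$ the coefficient tuple $(a_1,\dots,a_n)$ ranges over all of $\mathcal{O}_n^{\,n}$, so $\cB(f)=\{X(f)\}$ is precisely the Jacobian ideal $\langle \partial f/\partial x_1,\dots,\partial f/\partial x_n\rangle$. Dividing, $DH(f)=\mathcal{O}_n/\langle \partial f/\partial x_1,\dots,\partial f/\partial x_n\rangle$ is the Milnor algebra, whose dimension is $\mu(f)$ by definition. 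For the associated foliation one also allows the extra term $cf$, so $\cB(\cF_f)=\{X(f)+cf\}=\langle f,\partial f/\partial x_1,\dots,\partial f/\partial x_n\rangle$ is the Tjurina ideal, and $DH(\cF_f)=\mathcal{O}_n/\langle f,\partial f/\partial x_1,\dots,\partial f/\partial x_n\rangle$ is the Tjurina algebra, of dimension $\tau(f)$.

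There is no serious obstacle: the whole statement is a definition-chase resting on the single observation that in top degree the integrability conditions impose nothing, so that $\cZ(f)$ is all of $\mathcal{O}_n$. The only points deserving care are phrasing the degree count so that it holds uniformly in the smooth, analytic and formal categories, and making explicit the surjectivity $X\mapsto(a_1,\dots,a_n)$ that turns the image $\{X(f)\}$ into the genuine ideal generated by the partial derivatives rather than merely the submodule they visibly span.
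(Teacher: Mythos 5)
Your proposal is correct and follows exactly the route the paper takes: the paper's entire proof is the remark that $\cZ(f)=\mathcal{O}_n$ (which your degree count justifies, since in top degree the integrability conditions \eqref{eqn:integrable} are vacuous), after which both quotients are the Jacobian and Tjurina ideals by inspection of $\cL_X f = X(f)$. You have simply written out the details the paper dismisses as ``obvious.''
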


\begin{proof}
It is obvious. The set of infinitesimal deformations of $f$ is just $\mathcal{O}_n$.
\end{proof}

Note that in this case, if $\omega=fdx_1\wedge\hdots\wedge dx_n$ then $DH(\omega)$ is the quotient of $\Omega^n(\K^n)$ by 
$\{ d(f\theta)\,|\, \theta\in \Omega^{n-1}(\K^n) \}$ (denoting by $\Omega^{k}(\K^n)$ the vector space of $k$-differential forms) which is 
isomorphic to the quotient of $\mathcal{O}_n$ by  $\{X(f) + (div X)f\, | \, X\in\mathfrak{X}(\K^n) \}$. 
This deformation space has been computed in \cite{Monnier-Nambucohomology2001} (Theorem 3.14) when $f$ is a quasihomogeneous polynomial with 
an isolated singularity at 0 (its dimension is the Milnor number of $f$).

\subsection{Top order multi-vector fields}
Assume that
\begin{equation}\Lambda=f\frac{\partial}{\partial x_1}\wedge\ldots\wedge\frac{\partial}{\partial x_n}
\end{equation}
where $f$ is either a smooth real function or a real or complex analytic function such that $f(0) = 0$ and moreover $0$ is a singular point of $f$, i.e. $df(0) = 0$.

\begin{theorem}
With the same notations as above, we have
\begin{equation}
DH(\FF_\Lambda) \cong \frac{\mathcal{O}_n}
{\left\langle f,\frac{\partial f}{\partial_{x_1}},\ldots,
\frac{\partial f}{\partial_{x_n}}\right\rangle}
\end{equation}
and $\displaystyle \dim H_{def}(\FF_\Lambda)=\tau(f)$ is the  Tjurina number of $f$ at 0.
Moreover, 
\begin{equation}
DH(\Lambda) \cong \frac{\mathcal{O}_n} {\{X(f)-(div X)f\, | \, X\in\mathfrak{X}(\K^n) \}}\,.
\end{equation}
\end{theorem}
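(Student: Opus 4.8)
The plan is to exploit the fact that in top order ($q=n$) the object dual to $\Lambda$ under a volume form is a $0$-form, i.e. a function, so that the integrability conditions \eqref{eqn:integrable} and their linearizations become vacuous (they would involve $(q-1)=(-1)$-vector fields). First I would observe that consequently \emph{every} germ of an $n$-vector field is a Nambu structure, whence an infinitesimal deformation of $\Lambda$ is an arbitrary $n$-vector field $\Pi$. Writing $\nu = \partial/\partial x_1\wedge\cdots\wedge\partial/\partial x_n$, each such $\Pi$ equals $g\,\nu$ for a unique $g\in\mathcal{O}_n$, which identifies $\cZ(\Lambda)\cong\mathcal{O}_n$ via $g\nu\mapsto g$. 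Both cohomology computations then reduce to describing the images of the trivialization maps under this identification.

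The key computation is the Lie derivative of $\Lambda=f\nu$ along a vector field $X$. Using $\cL_X(f\nu)=X(f)\,\nu+f\,\cL_X\nu$ together with the identity $\cL_X\nu=-(\mathrm{div}\,X)\,\nu$, which I would verify by pairing $\nu$ with $\Omega=dx_1\wedge\cdots\wedge dx_n$ (since the pairing $\langle\Omega,\nu\rangle\equiv 1$ is constant and $\cL_X\Omega=(\mathrm{div}\,X)\Omega$ forces the stated sign), I obtain
\begin{equation*}
\cL_X\Lambda=\big(X(f)-(\mathrm{div}\,X)f\big)\,\nu .
\end{equation*}
Under the identification $\cZ(\Lambda)\cong\mathcal{O}_n$ this shows $\cB(\Lambda)=\{\,X(f)-(\mathrm{div}\,X)f\mid X\in\mathfrak{X}(\K^n)\,\}$, which yields the formula for $DH(\Lambda)$ at once.

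For $DH(\FF_\Lambda)$ I would add the extra trivial terms $h\Lambda=hf\,\nu$ permitted for foliations, so that $\cB(\FF_\Lambda)$ corresponds to
\begin{equation*}
\big\{\,X(f)-(\mathrm{div}\,X)f+hf \ \big|\ X\in\mathfrak{X}(\K^n),\ h\in\mathcal{O}_n\,\big\}.
\end{equation*}
The decisive point is an absorption argument: since $h$ is arbitrary, replacing $h$ by $h'=h-\mathrm{div}\,X$ (again an arbitrary function) eliminates the divergence term, and the set collapses to $\{X(f)\mid X\}+f\,\mathcal{O}_n$. Recognizing $\{X(f)\mid X\}=\langle\partial f/\partial x_1,\ldots,\partial f/\partial x_n\rangle$ (the Jacobian ideal) and $f\,\mathcal{O}_n=\langle f\rangle$, the denominator becomes exactly the Tjurina ideal $\langle f,\partial f/\partial x_1,\ldots,\partial f/\partial x_n\rangle$, whence $\dim DH(\FF_\Lambda)=\tau(f)$.

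I expect no serious obstacle: the whole content is the single Lie-derivative identity for $\cL_X(f\nu)$. The two points that require care are (i) justifying rigorously that $\cZ(\Lambda)$ is all of $\mathcal{O}_n$, i.e. that the linearized integrability conditions are genuinely empty in top degree, and (ii) fixing the sign of $\mathrm{div}\,X$ in $\cL_X\nu$ so that it matches the $-(\mathrm{div}\,X)f$ of the statement (and is consistent with the $+(\mathrm{div}\,X)f$ appearing on the dual form side). The absorption step in the foliation case is the only mildly non-obvious manipulation, and it is purely algebraic.
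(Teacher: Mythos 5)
Your proof is correct and follows essentially the same route as the paper: identify $\cZ(\Lambda)\cong\mathcal{O}_n$ (the integrability conditions being vacuous in top degree), compute $\cL_X(f\,\partial/\partial x_1\wedge\cdots\wedge\partial/\partial x_n)=\big(X(f)-(\mathrm{div}\,X)f\big)\,\partial/\partial x_1\wedge\cdots\wedge\partial/\partial x_n$, and then identify $\big\{X(f)-(\mathrm{div}\,X)f+hf\big\}$ with the Tjurina ideal $\left\langle f,\frac{\partial f}{\partial x_1},\ldots,\frac{\partial f}{\partial x_n}\right\rangle$. The only difference is that you make explicit the absorption of the divergence term into the arbitrary function $h$ and the sign verification for $\cL_X\nu$, which the paper dismisses with ``one easily checks.''
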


\begin{proof}
The vector space of infinitesimal deformations of $\Lambda$ is the space of (germs of) $n$-vector fields of type 
$f\frac{\partial}{\partial x_1}\wedge\ldots\wedge\frac{\partial}{\partial x_n}$ with $g\in \mathcal{O}_n$ which is isomorphic to $\mathcal{O}_n$. 
If $X$ is a vector field, we have $\displaystyle \mathcal{L}_X\Lambda = \big( X(f)-(div X)f \big) \frac{\partial}{\partial x_1}\wedge\ldots\wedge\frac{\partial}{\partial x_n}$ which gives the expression of $\mathcal{B}(\Lambda)$ and $\cB(\cF_\Lambda)$.
Finally, one easily checks that 
$\displaystyle \big\{ X(f)-(div X)f + gf \, | \, X\in\mathfrak{X}(\K^n),\, g\in \mathcal{O}_n \big\}$ is 
$\left\langle f,\frac{\partial f}{\partial_{x_1}},\ldots,
\frac{\partial f}{\partial_{x_n}}\right\rangle$.

\end{proof}

One can find some computations of this cohomology space in the case where $f$ is a quasihomogeneous polynomial with an isolated singularity at 0 in \cite{Monnier-Nambucohomology2001} and  \cite{Monnier-Poissoncohomology2002}.
More precisely, if $n=2$, it is the (germified) Poisson cohomology of the Poisson structure $\Lambda$ 
(see Theorems 4.9 and 4.11 in \cite{Monnier-Poissoncohomology2002}).
If $n\geq 3$, it is related to a Nambu cohomology space associated to $\Lambda$, denoted by $H_{f,n-2}^n(\K^n)$ or $H_\Lambda^2(\K^n)$ in 
\cite{Monnier-Nambucohomology2001} (Corollary 3.19). In these two cases, the dimension of the deformation cohomology space is finite and depends on the Milnor number
of $f$.


\subsection{Decomposable integrable forms with small singularities}

In this section, we work on $\C^n$, in the complex analytic category.
Suppose that $\Lambda$ is an analytic Nambu structure in a neighborhood of 0 in $\C^n$ and $\omega=i_{\Lambda}\Omega$, $\Omega$ is a volume form.
If $\omega$ is decomposable (i.e. $\omega=\omega_1\wedge\ldots\wedge\omega_p$) and 
$\mathrm{codim}(\omega)\geq 3$ then by Malgrange (see \cite{Mal}): 
\begin{equation}
\omega=ud{f_1}\wedge\ldots\wedge d{f_p},
\end{equation}
where $u$ is a function with $u(0)\neq 0$.
According to Lemma \ref{lem:isomfonctionpres} we can assume that $u=1$.

\begin{prop}
Let $\omega=d{f_1}\wedge\ldots\wedge d{f_p}$ be a complex analytic integrable $p$-form and $\eta$  is an infinitesimal deformation $\omega$. If $\mathrm{codim}S(\omega)\geq p+2$ then
$$\eta=a_0df_1\wedge\ldots\wedge df_p+\sum_{i=1}^p df_1\wedge\ldots\wedge df_{i-1}\wedge da_i\wedge df_{i+1}\wedge\ldots\wedge df_p.$$
\label{prop:decompositiondefinfinitesimal}
\end{prop}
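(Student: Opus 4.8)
The plan is to exploit that $\omega=df_1\wedge\cdots\wedge df_p$ is closed, so that the two linearized integrability equations recalled in Section 3 collapse into something manageable. Since $d\omega=0$, the second equation $i_A\omega\wedge d\eta+i_A\eta\wedge d\omega=0$ reduces to $i_A\omega\wedge d\eta=0$ for every $(p-1)$-vector field $A$, while the first, $i_A\omega\wedge\eta+i_A\eta\wedge\omega=0$, is exactly the linearization at $\omega$ of the decomposability relation $i_A\omega\wedge\omega=0$. I would first record the elementary fact that, as $A$ ranges over all $(p-1)$-vectors, the $1$-forms $i_A\omega$ span at each regular point the codistribution $\langle df_1,\dots,df_p\rangle$; hence the second equation is equivalent to $d\eta\wedge df_j=0$ for $j=1,\dots,p$, i.e. to $d\eta=\omega\wedge\gamma$ for some $1$-form $\gamma$.

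Next comes the pointwise, linear-algebra step. At a regular point the first equation says that $\eta$ lies in the tangent space to the cone of decomposable $p$-forms at $\omega$, and this tangent space is $\{\sum_i df_1\wedge\cdots\wedge\beta_i\wedge\cdots\wedge df_p\}$. Thus $\eta=a_0\,\omega+\sum_{i=1}^p df_1\wedge\cdots\wedge\beta_i\wedge\cdots\wedge df_p$, where the insertions $\beta_i$ are $1$-forms, well defined modulo $\langle df_1,\dots,df_p\rangle$. Concretely, choosing near a regular point coordinates $(f_1,\dots,f_p,y_{p+1},\dots,y_n)$ and writing the transverse parts $\beta_i=\sum_{m>p}g_{i,m}\,dy_m$, the surviving components of $\eta$ are exactly those missing at most one of $df_1,\dots,df_p$.

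The exactness step then turns each $\beta_i$ into an exact $da_i$. Writing $\omega_{i\to m}$ for $\omega$ with its $i$-th factor $df_i$ replaced by $dy_m$, I would compute $d\eta$ and isolate its part not divisible by $\omega$; the equation $d\eta=\omega\wedge\gamma$ forces precisely $\partial g_{i,m}/\partial y_k=\partial g_{i,k}/\partial y_m$ for all $i\le p$ and $k,m>p$. This means each $\beta_i$ is closed in the transverse variables, so the Poincaré lemma, applied fibrewise with $f_1,\dots,f_p$ as parameters, yields functions $a_i$ with $g_{i,m}=\partial a_i/\partial y_m$. Reinserting $da_i$ in the $i$-th slot reproduces $\sum_{m>p}g_{i,m}\,\omega_{i\to m}$ together with the multiple $(\partial a_i/\partial f_i)\,\omega$ of $\omega$; absorbing all such multiples into $a_0$ gives the claimed formula on the regular locus $\C^n\setminus S(\omega)$.

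The main obstacle is the passage from the regular locus to a full neighborhood of $0$, which is where the hypothesis $\codim S(\omega)\ge p+2$ enters. The functions $a_0,a_1,\dots,a_p$ have so far been produced only on $\C^n\setminus S(\omega)$, the primitives $a_i$ being determined only up to functions of $f_1,\dots,f_p$; one must therefore check that they glue into single-valued holomorphic functions off $S(\omega)$ and then extend them holomorphically across $S(\omega)$. I would handle both issues with a Hartogs-type, second Riemann extension argument: the obstruction to gluing and to extension is supported on $S(\omega)$, and a large enough codimension forces the relevant local cohomology to vanish. One expects the threshold to be $p+2$ rather than merely $2$ because the natural cocycle controlling the $a_i$ is built from the $p$ wedge factors of $\omega$ and so effectively loses $p$ in codimension, leaving an effective codimension $\ge 2$, the usual threshold for holomorphic extension. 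Making this precise — identifying the correct sheaf and invoking the appropriate vanishing/extension theorem — is the delicate part of the argument; the linear-algebra and Poincaré-lemma steps are routine by comparison.
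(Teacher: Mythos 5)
Your regular-locus analysis is sound and matches the easy half of the paper's argument: off $S(\omega)$, condition (2) reduces pointwise to $df_i\wedge d\eta=0$, condition (1) to $df_i\wedge df_j\wedge\eta=0$, and near a regular point a fibrewise Poincar\'e lemma produces the $a_i$. But the entire content of the proposition is the step you defer, namely producing $a_0,\dots,a_p$ as \emph{germs of holomorphic functions at the origin}, and your proposed mechanism for it (glue on $\C^n\setminus S(\omega)$, then apply a Hartogs/second-Riemann extension, on the grounds that the obstruction is ``supported on $S(\omega)$'' and dies in high codimension) is not the right one and cannot be completed as described. The paper never works on the regular locus and then extends: it stays at the level of germs throughout and invokes Malgrange's theorems from \emph{Frobenius avec singularit\'es II} --- the division theorem (to pass from $df_i\wedge df_j\wedge\eta=0$ to $df_i\wedge d\theta=\omega\wedge\beta_i$, and again at the end) and the vanishing of the relative de Rham cohomologies $H^p(\Omega^\ast_{f_1,\dots,f_k})$, $k=1,\dots,p$, and $H^1(\Omega^\ast_{f_1,\dots,f_p})$ (to write $\eta=d\theta+c\,\omega$ and $\beta_i=da_i+\sum_j b_{ij}\,df_j$). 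The hypothesis $\mathrm{codim}\,S(\omega)\geq p+2$ is exactly the hypothesis of those theorems; they are deep results proved via coherence of direct image sheaves, not by removing sets of small codimension, and even your innocuous-looking step ``$df_j\wedge d\eta=0$ for all $j$, i.e.\ $d\eta=\omega\wedge\gamma$'' already requires the division theorem to hold at germ level rather than just pointwise.

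Here is why your mechanism fails in principle. The local primitives $a_i$ are defined only up to functions pulled back by $F=(f_1,\dots,f_p)$, so the obstruction to gluing them is a 1-cocycle on the regular locus with values in the sheaf $F^{-1}\mathcal{O}_{\C^p}$; it depends on the topology of the fibers of $F$ (the Milnor fibration) and is in no sense supported on $S(\omega)$. Take $p=1$, $f=xy$ on $\C^2$, $\omega=df$, so $S(\omega)=\{0\}$ has codimension $2$ --- ample for Hartogs extension of honest single-valued functions. Both linearized integrability conditions are vacuous on $\C^2$ (they are 3-forms or identically cancel), so $\eta=x\,dy$ is an infinitesimal deformation, and your local construction goes through near every regular point; yet $\eta$ cannot equal $a_0\,df+da_1$, since differentiating would give $da_0\wedge df=dx\wedge dy$, which is impossible at the origin where $df$ vanishes. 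What breaks is not extension across $\{0\}$ but gluing on $\C^2\setminus\{0\}$ itself: even the auxiliary equation $d\eta=\omega\wedge\gamma$ has no holomorphic solution $\gamma$ on $\C^2\setminus\{0\}$ (by Hartogs it would extend to $0$, forcing $dx\wedge dy=df(0)\wedge\gamma(0)=0$), because the fibers $\{xy=c\}$ have nontrivial first cohomology. This example lies outside the range $\mathrm{codim}\,S(\omega)\geq p+2$, consistent with your heuristic threshold, but it shows that the failure mode is fibre topology, invisible to any argument phrased as extension across $S(\omega)$. When the codimension hypothesis does hold, the gluing works \emph{because of} Malgrange's relative de Rham vanishing theorem; to close your gap you must invoke it (as the paper does) or reprove it, and that is the actual theorem-level input, not a routine sheaf-vanishing step.
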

It means that $\omega + \epsilon \eta$ is also decomposable and admits first integrals modulo $\epsilon^{2}$.

\begin{proof}
By definition, $\eta$ satisfies for all $(p-1)$-vector field $A$ :
\begin{eqnarray}
i_A\eta \wedge df_1\wedge\hdots\wedge df_p + i_A(df_1\wedge\hdots\wedge df_p)\wedge \eta & = & 0 \label{eqn:infdefdecomp1}\\
i_A(df_1\wedge\hdots\wedge df_p) \wedge d\eta & = & 0 \label{eqn:infdefdecomp2}
\end{eqnarray}
 We first claim that (\ref{eqn:infdefdecomp2}) is equivalent to 
\begin{equation}
    df_i\wedge d\eta=0 \quad (\forall i=1,\hdots,p)\,. 
\label{eqn:infdefdecomp3}
\end{equation}
Indeed, if $x\notin S(\omega)$ then $df_1(x),\hdots, df_p(x)$ are independent and if $E_x$ is the subspace of $(\C^n)^\ast$ generated by the linear forms $df_1(x),\hdots, df_p(x)$, we consider constant vector fields $X_1,\hdots,X_p$ such that 
$\langle df_i(x),X_j(x) \rangle = \delta_{ij}$ (Kronecker symbol) for all $i$ and $j$.
We put $A_i=X_1\wedge\hdots\widehat{X_i}\hdots\wedge X_{p}$ and 
if (\ref{eqn:infdefdecomp2}) is satisfied, it gives $df_i(x)\wedge d\eta(x)=0$ for all $i=1,\hdots,p$. The converse is obvious.

Now, using successively the vanishing of the relative de Rham cohomology spaces $H^p(\Omega^\ast_{f_1,\hdots,f_k})$ for $k=1,\hdots,p$
(see \cite{Mal}) we get
\begin{equation}
    \eta = d\theta + c df_1\wedge\hdots\wedge df_p\,,
\label{eqn:infdefdecomp4}
\end{equation}
for some $(p-1)$-form $\theta$ and function $c$.

In the same way as above, we can show that (\ref{eqn:infdefdecomp1}) implies 
\begin{equation}
    df_i\wedge df_j\wedge \eta = 0\quad \forall i,j=1\hdots,p\,.
\end{equation}
Consequently, in the decomposition (\ref{eqn:infdefdecomp4}), $d\theta$ satisfies this condition too. 
For every $i$, using successively the division theorem (see for instance Proposition 1.1 in \cite{Mal}),
one can show easily that 
\begin{equation}
    df_i\wedge d\theta = df_1\wedge\hdots\wedge df_p\wedge\beta_i
\end{equation}
where $\beta_i$ is a 1-form. Now, we get 
$\displaystyle df_1\wedge\hdots\wedge df_p\wedge d\beta_i = 0$ so $\beta_i$ is a 1-cocyle in the relative de Rham cohomology 
$H^1(\Omega^\ast_{f_1,\hdots,f_p})$, which gives $\beta_i = da_i + \sum_{j=1}^pb_{ij}df_j$ for some functions $a_i$ and $b_{ij}$. Therefore, 
\begin{equation}
     df_i\wedge d\theta = df_1\wedge\hdots\wedge df_p\wedge da_i \,.
\end{equation}
It gives
\begin{equation}
    df_i\wedge \Big( d\theta  + \sum_{j=1}^p(-1)^j df_1\wedge\hdots\wedge\widehat{df_j}\wedge\hdots\wedge df_p\wedge da_j\Big) = 0
\end{equation}
for every $i=1,\hdots,p$, which implies, by the division theorem,
\begin{equation}
    d\theta  + \sum_{j=1}^p(-1)^j df_1\wedge\hdots\wedge\widehat{df_j}\wedge\hdots\wedge df_p\wedge da_j = b df_1\wedge\hdots\wedge df_p
\end{equation}
for some function $b$. The proposition follows.
\end{proof}

\bigskip

If, $\omega=df_1\wedge\hdots\wedge df_p$, we consider $F$ the analytic map from $\C^n$ to $\C^p$ defined by $F=(f_1,\hdots, f_p)$.
If $X\in\mathfrak{X}(\C^n)$ and $H=(H_1,\hdots,H_p)$ is an analytic map from $\C^p$ to $\C^p$, we denote 
$X.F=(X.f_1,\hdots,X.f_p)$ and $H(F)=(H_1(f_1,\hdots,f_p),\hdots,H_p(f_1,\hdots,f_p))$. Now, we put
$$
\mathcal{I}_{RL}(F) = \big\{X.F + H(F)\,|\, X\in\mathfrak{X}(\C^n)\,,\, H\in (\mathcal{O}_p)^p\big\}
\quad \mathrm{and}\quad \mathcal{Q}_{RL}(F) = \frac{(\mathcal{O}_n)^p}{\mathcal{I}_{RL}(F)}\,.
$$
Recall that $\mathcal{Q}_{RL}(F)$ measures the stability of the germ $F$ and the versal deformations of $F$ with respect to the Right-Left-equivalence (see for instance \cite{Arnold}).
More precisely, another germ of analytic map $G$ is RL-equivalent to $F$ if there exists a germ of analytic diffeomorphism $\phi$ of 
$(\C^n,0)$ and a germ of analytic diffeomorphism $\psi$ of $(\C^p,0)$ such that $G=\psi\circ F\circ\phi$.

\begin{theorem}
If $\omega=df_1\wedge\hdots\wedge df_p$ with $\mathrm{codim}S(\omega)\geq p+2$, then, with the notations above, we have
\begin{equation}
DH(\cF_{\omega}) \simeq  \mathcal{Q}_{RL}(F)\,.
\end{equation}
\end{theorem}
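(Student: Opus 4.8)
The plan is to realize the isomorphism concretely through the decomposition of infinitesimal deformations furnished by Proposition \ref{prop:decompositiondefinfinitesimal}. Writing $\omega=df_1\wedge\cdots\wedge df_p$ and $F=(f_1,\dots,f_p)$, that proposition says every $\eta\in\cZ(\omega)$ can be written as
\[
\eta = a_0\,\omega + \sum_{i=1}^p df_1\wedge\cdots\wedge df_{i-1}\wedge da_i\wedge df_{i+1}\wedge\cdots\wedge df_p
\]
for some germs $a_0,a_1,\dots,a_p\in\mathcal{O}_n$. I would define a map $\Phi\colon\cZ(\omega)\to\mathcal{Q}_{RL}(F)$ by $\Phi(\eta)=[(a_1,\dots,a_p)]$ and prove that $\Phi$ is a well-defined surjection whose kernel is exactly $\cB(\cF_\omega)$; this immediately yields $DH(\cF_\omega)=\cZ(\omega)/\cB(\cF_\omega)\simeq\mathcal{Q}_{RL}(F)$. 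Intuitively, $\eta$ records the infinitesimal deformation $f_i\mapsto f_i+\ve a_i$ of the defining map $F$, and the three ways of trivializing a deformation of a foliation (reparametrizing the source by a vector field, reparametrizing the target, and rescaling $\omega$) should match the three ingredients $X.F$, $H(F)$, and the free $a_0$-slot.

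Two direct computations organize everything. First, since $df_i$ is closed, $\cL_X df_i=d(X.f_i)$, so $\cL_X\omega=\sum_{i=1}^p df_1\wedge\cdots\wedge d(X.f_i)\wedge\cdots\wedge df_p$, which under the decomposition above corresponds to the tuple $X.F=(X.f_1,\dots,X.f_p)$. Second, for $H=(H_1,\dots,H_p)\in(\mathcal{O}_p)^p$ one has $d\bigl(H_i(F)\bigr)=\sum_j\frac{\partial H_i}{\partial y_j}(F)\,df_j$, and wedging with the remaining $df_k$ kills every term except $j=i$, giving
\[
\sum_{i=1}^p df_1\wedge\cdots\wedge d\bigl(H_i(F)\bigr)\wedge\cdots\wedge df_p = (\operatorname{div}H)(F)\,\omega,
\]
where $\operatorname{div}H=\sum_j\partial H_j/\partial y_j$ on $\C^p$. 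Thus a tuple of the form $H(F)$ contributes only to the $a_0$-slot. Combining the two, $\Phi(\cL_X\omega)=[X.F]=0$ and $\Phi(g\omega)=0$ in $\mathcal{Q}_{RL}(F)$, so $\cB(\cF_\omega)\subseteq\ker\Phi$; surjectivity is clear, since for any $(c_1,\dots,c_p)$ the form $\sum_i df_1\wedge\cdots\wedge dc_i\wedge\cdots\wedge df_p$ is closed and decomposable, hence integrable, hence lies in $\cZ(\omega)$ and maps to $[(c_1,\dots,c_p)]$.

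The crux is the characterization of when the ``$a_i$-part'' is trivial:
\[
\sum_{i=1}^p df_1\wedge\cdots\wedge da_i\wedge\cdots\wedge df_p \in \mathcal{O}_n\cdot\omega \iff a_i=H_i(F)\ \text{for some } H\in(\mathcal{O}_p)^p.
\]
The implication $\Leftarrow$ is the second computation above; the reverse implication is the main obstacle. I would establish it first at a regular point of $\omega$, where the division and normal-form results quoted from \cite{Mal} let us take $f_i=x_i$, so that $\omega=dx_1\wedge\cdots\wedge dx_p$ and the membership condition reduces to $\partial a_i/\partial x_k=0$ for all $i\le p$ and $k>p$; hence each $a_i$ depends only on $x_1,\dots,x_p$ and is of the form $H_i(F)$. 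The local functions $H_i$ then extend as germs across $S(\omega)$ by a Hartogs-type argument using $\operatorname{codim}S(\omega)\ge p+2$ (the same hypothesis and techniques driving Proposition \ref{prop:decompositiondefinfinitesimal}). Granting this lemma, $\Phi$ is well defined, since two decompositions of the same $\eta$ differ by a tuple $H(F)\in\mathcal{I}_{RL}(F)$; and if $\Phi(\eta)=0$ then $(a_1,\dots,a_p)=X.F+H(F)$, so $\eta=\cL_X\omega+\bigl(a_0+(\operatorname{div}H)(F)\bigr)\omega\in\cB(\cF_\omega)$, giving $\ker\Phi\subseteq\cB(\cF_\omega)$. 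Hence $\ker\Phi=\cB(\cF_\omega)$, completing the identification $DH(\cF_\omega)\simeq\mathcal{Q}_{RL}(F)$.
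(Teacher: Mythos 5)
Your overall architecture coincides with the paper's, and it works: both proofs rest on Proposition \ref{prop:decompositiondefinfinitesimal}, on the computation $\cL_X\omega=\sum_i df_1\wedge\hdots\wedge d(X.f_i)\wedge\hdots\wedge df_p$, and on the correspondence between tuples of the form $H(F)$ and deformations proportional to $\omega$ (your $\operatorname{div}H$ identity is the same fact the paper uses to show that multiples $H(F)\,\omega$ lie in the space $A$ of ``$da_i$-type'' deformations). The only structural difference is bookkeeping: the paper introduces the subspaces $A$, $B$, $C$, $D$, reduces $DH(\cF_\omega)=(A+B)/(B+C)$ to $A/(D+C)$ via $C\subseteq A$ and $A\cap B=D$, and defines the linear map in the direction $(\mathcal{O}_n)^p\to DH(\cF_\omega)$ with kernel $\mathcal{I}_{RL}(F)$, whereas you send $\cZ(\omega)\to\mathcal{Q}_{RL}(F)$; these are equivalent. (One phrasing slip on your side: for surjectivity, what matters is not that $\sum_i df_1\wedge\hdots\wedge dc_i\wedge\hdots\wedge df_p$ is itself integrable, but that $\omega+\ve\eta_c=\bigwedge_i d(f_i+\ve c_i)$ modulo $\ve^2$ is closed and decomposable, hence integrable modulo $\ve^2$ --- which is exactly your own intuition about deforming $F$.)

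The one substantive divergence is your proof of the ``crux'': if $\sum_i df_1\wedge\hdots\wedge da_i\wedge\hdots\wedge df_p$ is a multiple of $\omega$, then each $a_i=H_i(F)$. The paper's argument is one line and complete: wedge the identity with $df_i$; every term $j\neq i$ already contains $df_i$ and dies, as does the right-hand side, leaving $df_1\wedge\hdots\wedge df_p\wedge da_i=0$; then Theorem 2.1.1 of \cite{Mal} (applicable since $\mathrm{codim}\,S(\omega)\geq p+2\geq 3$) gives $a_i=H_i(F)$. Your alternative --- normal form at regular points plus a ``Hartogs-type'' extension across $S(\omega)$ --- is in effect an attempt to reprove that theorem of Malgrange, and it leaves the genuinely hard steps unaddressed: to glue the locally defined $H_i$ into a single germ you need connectedness of the punctured local fibres of $F$, and to extend across $F(S(\omega))$ you need control of that image in $\C^p$; neither is automatic, and securing them is precisely the nontrivial content of \cite{Mal}. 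So replace your sketch by the wedge-with-$df_i$ reduction followed by the citation of \cite{Mal}; with that single repair your proof is correct and is essentially the paper's proof.
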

\begin{proof}
We denote 
\begin{eqnarray*}
A & = & \big\{ \sum_{i=1}^p df_1\wedge\ldots\wedge df_{i-1}\wedge da_i\wedge df_{i+1}\wedge\ldots\wedge df_p\,|\, a_i\in\mathcal{O}_n \big\} \\
B & = & \big\{ bdf_1\wedge\hdots\wedge df_p \,|\, b\in\mathcal{O}_n \big\} \\
C & = & \big\{ \sum_{i=1}^p df_1\wedge\ldots\wedge df_{i-1}\wedge dX(f_i)\wedge df_{i+1}\wedge\ldots\wedge df_p  \,|\, X\in\mathfrak{X}(\C^n) \big\}
\end{eqnarray*}

By Proposition \ref{prop:decompositiondefinfinitesimal}, we have 
$$
DH(\cF_{\omega}) = \frac{A+B}{B+C} \simeq \frac{A}{A\cap(B+C)}\,.
$$

Clearly, $C$ is included in $A$. If $bdf_1\wedge\hdots\wedge df_p\in B$ is in $A$, i.e. of the form
$\sum_{i=1}^p df_1\wedge\ldots\wedge df_{i-1}\wedge da_i\wedge df_{i+1}\wedge\ldots\wedge df_p$ then, we have 
$db\wedge df_1\wedge\hdots\wedge df_p=0$ which gives $b=H(f_1,\hdots,f_p)$ where $H\in\mathcal{O}_p$ (see \cite{Mal}, Theorem 2.1.1). 
Conversely, if $b=H(f_1,\hdots,f_p)$ with $H\in\mathcal{O}_p$ then $bdf_1\wedge\hdots\wedge df_p$ is in $A\cap B$.

Therefore, we have
$$
DH(\cF_{\omega}) \simeq \frac{A}{D+C}\,,
$$
where $D=\big\{ H(f_1,\hdots,f_p)df_1\wedge\hdots\wedge df_p \,|\, H\in\mathcal{O}_p \big\}$.

Now, we define $\Phi : (\mathcal{O}_n)^p \longrightarrow DH(\cF_{\omega})$ such that if $G=(g_1,\hdots,g_p)$ we have
$$
\Phi(G)= \big[ \sum_{i=1}^p df_1\wedge\ldots\wedge df_{i-1}\wedge dg_i\wedge df_{i+1}\wedge\ldots\wedge df_p \big]\,.
$$
It is a surjective linear map. It is clear that $\mathcal{I}_{RL}(F)$ is included in the kernel of $\Phi$. Moreover,
if $\Phi(G)=0$ then there exist a vector field $X$ on $\C^n$ and an analytic map $K$ from $\C^p$ to $\C^p$ such that
$$
\sum_{i=1}^p df_1\wedge\ldots\wedge df_{i-1}\wedge (dg_i - X.f_i)\wedge df_{i+1}\wedge\ldots\wedge df_p =
K(f_1,\hdots,f_p)df_1\wedge\hdots\wedge df_p\,.
$$
It implies that for every $i=1,\hdots,p$, we have $df_1\wedge\hdots\wedge df_p\wedge(dg_i - X.f_i)=0$ which gives (see \cite {Mal})
that $g_i=X.f_i + H_i(f_1,\hdots,f_p)$ for some $H_i\in\mathcal{O}_p$. Therefore, $G\in \mathcal{I}_{RL}(F)$.
\end{proof}

\begin{cor}
If $\omega=df$ (Nambu structure of order $n-1$) and $\mathrm{codim}S(df)\geq 3$ then 
\begin{equation}DH(\cF_{df}) \simeq \frac{\mathcal{O}_n}{\left\{ a_i\frac{\partial f}{\partial_{x_1}}+\ldots+a_n\frac{\partial f}{\partial_{x_n}}+h\circ f\, | \, a_i\in\mathcal{O}_n, h\in\mathcal{O}_1\right\}}
\end{equation}
In particular, $\mu(f)\geq \mathrm{dim} DH(\cF_{df})\geq\tau(f)-1$.
\end{cor}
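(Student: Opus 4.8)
The plan is to read off the corollary as the case $p=1$ of the preceding Theorem and then to estimate the resulting quotient by sandwiching it between the Jacobian and the Tjurina ideals of $f$.

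First I would specialize the Theorem to $p=1$, $\omega=df$. The hypothesis $\mathrm{codim}\,S(\omega)\geq p+2$ becomes exactly $\mathrm{codim}\,S(df)\geq 3$, and the associated map $F=(f_1,\ldots,f_p)$ reduces to the single germ $F=f\colon(\C^n,0)\to(\C,0)$; here $f(0)=0$ and, as $0$ is a singular point, $df(0)=0$. The Theorem then yields $DH(\cF_{df})\simeq\mathcal{Q}_{RL}(f)=\mathcal{O}_n/\mathcal{I}_{RL}(f)$, so it only remains to unwind $\mathcal{I}_{RL}(f)$. Since $X.f=\sum_i X_i\,\partial f/\partial x_i$ with the components $X_i$ ranging over all of $\mathcal{O}_n$, the subspace $\{X.f\mid X\in\mathfrak{X}(\C^n)\}$ equals the Jacobian ideal $J_f=\langle \partial f/\partial x_1,\ldots,\partial f/\partial x_n\rangle$; and for $p=1$ a target germ $H\in(\mathcal{O}_p)^p$ is a single one-variable germ $h\in\mathcal{O}_1$, so $H(F)=h\circ f$. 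This reproduces the displayed denominator and proves the isomorphism.

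For the numerical bounds I would set $DH(\cF_{df})=\mathcal{O}_n/L$ with $L=J_f+V$, where $V=\{h\circ f\mid h\in\mathcal{O}_1\}$ is the space of convergent power series in $f$, and compare $L$ with the Tjurina ideal $T_f=J_f+\langle f\rangle$. The crucial remark is the two-sided inclusion $J_f\subseteq L\subseteq T_f+\C\cdot 1$. The left inclusion is immediate. For the right one, write $h\circ f=h(0)\cdot 1+\tilde h(f)$ with $\tilde h(0)=0$; then $\tilde h(f)\in f\,\mathcal{O}_n\subseteq T_f$, so $V\subseteq T_f+\C\cdot 1$ and hence $L\subseteq T_f+\C\cdot 1$. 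From $J_f\subseteq L$ I get $\dim DH(\cF_{df})\le\dim\mathcal{O}_n/J_f=\mu(f)$. From $L\subseteq T_f+\C\cdot1$ I get a surjection $\mathcal{O}_n/L\twoheadrightarrow\mathcal{O}_n/(T_f+\C\cdot1)$, hence $\dim DH(\cF_{df})\ge\dim\mathcal{O}_n/(T_f+\C\cdot1)$. Finally, since $0$ is a singular point of $f$ with $f(0)=0$ the ideal $T_f$ lies in the maximal ideal $\mathfrak m$ of $\mathcal{O}_n$, so $1\notin T_f$ and the extra generator contributes exactly one dimension: $\dim\mathcal{O}_n/(T_f+\C\cdot1)=\tau(f)-1$. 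Combining gives $\mu(f)\ge\dim DH(\cF_{df})\ge\tau(f)-1$.

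I do not anticipate a real obstacle, as all the analytic work is already carried by the Theorem; the corollary is essentially a bookkeeping exercise. The one point that needs genuine attention is the lower bound: one must verify that $V$ enlarges the Tjurina ideal by the single extra dimension spanned by the constant $1$ (every non-constant $h\circ f$ already lies in $\langle f\rangle\subseteq T_f$), and that $1\notin T_f$, which holds precisely because $0$ is a critical point of $f$.
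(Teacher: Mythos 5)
Your proposal is correct and follows essentially the same route as the paper: the isomorphism is the direct specialization $p=1$ of the preceding Theorem, and the dimension bounds come from sandwiching the denominator between the Jacobian ideal and the Tjurina ideal plus constants, which is exactly the argument in the paper's subsequent Remark (with $1\notin\langle f,\partial f/\partial x_1,\ldots,\partial f/\partial x_n\rangle$ because $f(0)=0$, $df(0)=0$). The only cosmetic difference is that the paper also puts the constants $\C\cdot 1$ on the left side of the sandwich (since constant $h$ are allowed in $h\circ f$), yielding the slightly sharper upper bound $\mu(f)-1$, while you prove the weaker bound $\mu(f)$ that the corollary actually states.
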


\begin{remark}
We consider the two ideals of $\mathcal{O}_n$, 
$I_f=\left\langle\frac{\partial f}{\partial_{x_1}},\ldots,\frac{\partial f}{\partial_{x_n}}\right\rangle$ 
and $J_f=\left\langle f,\frac{\partial f}{\partial_{x_1}},\ldots,\frac{\partial f}{\partial_{x_n}}\right\rangle$, 
If we assume that $df(0)=0$ then $X.f$ is not a constant for any vector field $X$ so, we have
$I_f\oplus\C \subset \mathcal{I}_{RL}(f)\subset J_f\oplus\C$ which gives $\mu(f)-1\geq \mathrm{dim} DH(\cF_{df})\geq\tau(f)-1$.
Consequently, if 0 is an isolated singularity of $f$, i.e. $\mu(f) < \infty$, then $\mathrm{dim} DH(\cF_{df})<\infty$.
If moreover, $f$ is a quasihomogeneous polynomial, then $I_f=J_f$  which gives $\mathrm{dim} DH(\cF_{df})= \mu(f)-1 = \tau(f)-1$.
\end{remark}

\begin{cor}
If $0$ is an isolated singularity of $\omega=udf_1\wedge\ldots\wedge df_p$  then  $\mathrm{dim} DH(\cF_{\omega})<\infty$
\end{cor}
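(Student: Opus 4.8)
The plan is to reduce the statement to a finiteness result in singularity theory via the preceding theorem. First, by Lemma \ref{lem:isomfonctionpres} the cohomology $DH(\cF_\omega)$ depends only on the foliation $\cF_\omega$ and is unchanged when $\omega$ is multiplied by the invertible function $u$; hence I may assume $u=1$ and work with $\omega=df_1\wedge\hdots\wedge df_p$. The hypothesis that $0$ is an isolated singularity of $\omega$ means exactly that $S(\omega)=\{0\}$ near the origin, so $\mathrm{codim}\,S(\omega)=n$. In the relevant range $n\geq p+2$ this meets the hypothesis of the previous theorem, which therefore yields the isomorphism $DH(\cF_\omega)\simeq \mathcal{Q}_{RL}(F)$ with $F=(f_1,\hdots,f_p)$. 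Thus it suffices to prove that $\dim_{\C}\mathcal{Q}_{RL}(F)<\infty$, i.e. that the $\mathcal{A}_e$-codimension of the germ $F$ is finite.

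For this I would argue that the quotient $(\mathcal{O}_n)^p/\mathcal{I}_{RL}(F)$ is concentrated at the origin. Away from the singular set, i.e. at any point $x\neq 0$ with $x\notin S(\omega)$, the covectors $df_1(x),\hdots,df_p(x)$ are linearly independent, so $F$ is a submersion near $x$; after a local change of coordinates one may take $f_i=x_i$, and then for $X=\sum_j X_j\,\partial/\partial x_j$ one gets $X.F=(X_1,\hdots,X_p)$, so already the ``infinitesimal right'' part $\{X.F\mid X\in\mathfrak{X}(\C^n)\}$ generates the whole of $(\mathcal{O}_n)^p$ near $x$. Hence the germ of $F$ is stable at every such $x$, and the only possible instability of $F$ sits at $0$; since $S(\omega)=\{0\}$, this instability is isolated.

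The finiteness then follows from the Mather--Gaffney geometric criterion: a map germ has finite $\mathcal{A}_e$-codimension precisely when it has an isolated instability. I expect this last step to be the main obstacle, because $\mathcal{I}_{RL}(F)$ is \emph{not} an $\mathcal{O}_n$-submodule of $(\mathcal{O}_n)^p$ --- the target part $\{H(F)\}$ is only stable under multiplication by elements of $F^{*}\mathcal{O}_p=\C\{f_1,\hdots,f_p\}$ --- so one cannot pass from ``support equal to $\{0\}$'' to finite-dimensionality by a naive Nakayama or Nullstellensatz argument. The honest route goes through the Malgrange preparation theorem, which converts finite generation of the $\mathcal{O}_n$-module $(\mathcal{O}_n)^p/\{X.F\}$ over $F^{*}\mathcal{O}_p$ into the $\C$-finiteness of the further quotient by $\{H(F)\}$. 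Once finiteness of $\mathcal{Q}_{RL}(F)$ is established, combining it with the isomorphism of the theorem finishes the proof.
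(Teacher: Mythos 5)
Your proposal is correct, and it agrees with the paper up to and including the reduction: normalize $u=1$ via Lemma \ref{lem:isomfonctionpres}, observe that an isolated singularity gives $\mathrm{codim}\,S(\omega)=n$, and apply the preceding theorem to get $DH(\cF_\omega)\simeq\mathcal{Q}_{RL}(F)$. But for the crucial finiteness of $\mathcal{Q}_{RL}(F)$ you take a genuinely different route. You identify $\mathcal{Q}_{RL}(F)$ with the $\mathcal{A}_e$-codimension of the germ $F$ and invoke the Mather--Gaffney criterion, using that $F$ is a submersion (hence stable, indeed with empty critical locus) away from $S(\omega)=\{0\}$; this is legitimate in the complex analytic category, and your observation that $\mathcal{I}_{RL}(F)$ is not an $\mathcal{O}_n$-submodule --- so that support-at-the-origin does not naively give finite dimension --- is precisely the difficulty that Mather--Gaffney, via the preparation theorem, is designed to overcome. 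The paper sidesteps that difficulty instead of confronting it: it proves the \emph{stronger} statement that already the quotient $(\mathcal{O}_n)^p/\left\{X.F\mid X\in\mathfrak{X}(\C^n)\right\}$, dividing only by the ``right'' part, is finite-dimensional; this denominator \emph{is} an $\mathcal{O}_n$-submodule (generated by the columns of the Jacobian of $F$), so elementary commutative algebra applies. Concretely, the paper argues by induction on $p$: explicit cofactor vector fields $X$ satisfying $X.f_i=0$ for $i<p$ realize every $p\times p$ Jacobian minor as a value $X.f_p$, and since the common zero locus of these minors is $S(\omega)=\{0\}$, Hilbert's Nullstellensatz yields an $N$ with $(\mathfrak{M}^N)^p\subseteq\left\{X.F\right\}$, whence the quotient is a quotient of $\left(\mathcal{O}_n/\mathfrak{M}^N\right)^p$. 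The trade-off: your argument is shorter and situates the corollary in standard stability theory, but rests on a deep cited theorem; the paper's argument is self-contained (only the Nullstellensatz) and delivers more, namely that the target part $H(F)$ is not even needed for finiteness. Both proofs share the same implicit restriction, namely that the theorem requires $n\geq p+2$, which your hedge ``in the relevant range'' handles no worse than the paper does.
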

\begin{proof}
We prove that, denoting $F=(f_1,\hdots, f_p)$, the quotient $\displaystyle \frac{(\mathcal{O}_n)^p}{\big\{X.F\,|\, X\in\mathfrak{X}(\C^n)\big\}}$ has a finite dimension. If we denote by $\mathfrak{M}$ the ideal of $\mathcal{O}_n$ formed by the functions vanishing at 0, we prove that there is a positive integer  $N$ such that if $g_i\in \mathfrak{M}^N$, $i=1,\hdots ,p$, then there exists a vector field $X$ such that $X.f_i=g_i$, $i=1,\hdots ,p$. The corollary will follow naturally.

We prove it by induction on $p$. The case $p=1$ is a direct consequence of Hilbert's Nullstellensatz Theorem. If the statement is true for $p-1$, we prove the existence of an integer $N$ such that for all $g\in \mathfrak{M}$, there is a vector field $X$ which satisfies $X.f_p=g$ and $X.f_i=0$,  $i=1,\hdots ,p-1$.

We consider the ideal $\mathcal{I}$ of $\mathcal{O}_n$ formed by functions  
$g\in \mathfrak{M}$ such that there exists a vector field $X$  satisfying $X.f_p=g$ and $X.f_i=0$,  $i=1,\hdots ,p-1$.

For $1\leq i_1 < \hdots < i_p \leq n$, denoting 
$$
X= \sum_{j=1}^p (-1)^{p+j} det\Big( \frac{\partial(f_1,\hdots,f_{p-1})}{\partial(x_{i_1},\hdots,\widehat{x_{i_j}},\hdots, x_{i_p})} \Big) \frac{\partial}{\partial x_{i_j}}
$$
we have  $X.f_i=0$,  $i=1,\hdots ,p-1$ and $X.f_p= det\Big( \frac{\partial(f_1,\hdots,f_{p})}{\partial(x_{i_1},\hdots, x_{i_p})} \Big)$.
Therefore, the function $\displaystyle det\Big( \frac{\partial(f_1,\hdots,f_{p})}{\partial(x_{i_1},\hdots, x_{i_p})} \Big)$ is in $\mathcal{I}$.

By the hypothesis, the zero locus of $\mathcal{I}$ is $\{0\}$ and it finishes the proof, using Hilbert's Nullstellensatz Theorem.
\end{proof}

\subsection{Vector fields and linear Nambu structures}

If $\Lambda = X$ is a vector field, the leaves of the associated foliation are integral curves of $X$. The normalization
of this foliation is the same as the
orbital normalization of $X$.

In the case $\Lambda=X$ is a formal vector field in $(\K^n,O)$ whose linear part  $X^{(1)}$ is non-trivial, we denote by $DH_{lin}(X^{(1)})$
the quotient of the vector space of {\it linear} infinitesimal deformations of $X^{(1)}$ by the vector space of {\it linear} trivial 
deformations of $X^{(1)}$.

Then by the classical Poncar\'e-Dulac formal normalization theory we have the following proposition.

\begin{prop}
Suppose that $X^{(1)}$ is non-resonant, then we have :
\begin{itemize}
    \item[(i)]   $DH(X) = DH_{lin}(X^{(1)})$ and $\dim DH(\cF_X) = \dim DH_{lin}(X^{(1)}) -1$.
    \item[(ii)]  $\dim DH_{lin}(X^{(1)}) = n^2 -d \geq n$ where $d$ is the dimension of the adjoint orbit of $X^{(1)}$ in $\mathfrak{gl}_n(\K)$.
    \item[(iii)] In the generic case (i.e. the eigenvalues of $X^{(1)}$ are distincts) we have $\dim DH_{lin}(X^{(1)}) =~n$.
\end{itemize}
\end{prop}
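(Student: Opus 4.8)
The plan is to reduce the whole statement to linear algebra on the operator $\mathrm{ad}_{X^{(1)}}=[\,\cdot\,,X^{(1)}]$ by formal linearization, and then to a degree-by-degree analysis. First I would record that for a vector field the space of infinitesimal deformations is as large as possible: writing $\omega=i_X\Omega$, which is an $(n-1)$-form, condition $(2)$ of \eqref{eqn:integrable} is automatic since $d\omega\wedge i_A\omega$ is an $(n+1)$-form on an $n$-manifold, while condition $(1)$ is exactly the decomposability condition, which holds for every $(n-1)$-covector. Hence every vector field is a Nambu structure of order $1$, so $\cZ(X)=\mathfrak{X}(\K^n)$, while $\cB(X)=\{[Y,X]\mid Y\in\mathfrak{X}(\K^n)\}$ and $\cB(\cF_X)=\{[Y,X]+fX\}$. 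Thus $DH(X)=\mathrm{coker}\,\mathrm{ad}_{X}$, and $DH(\cF_X)$ is the further quotient of $DH(X)$ by the image of $\{fX\mid f\in\mathcal{O}_n\}$.

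Next, by the Poincaré--Dulac theorem non-resonance of $X^{(1)}$ yields a formal diffeomorphism $\phi$ with $\phi_*X=X^{(1)}$. Since $\cZ$, $\cB$ and $\cB(\cF_\cdot)$ are built from natural operations (contraction, $d$, Lie derivative) they are equivariant under $\phi$, so $DH(X)\cong DH(X^{(1)})$ and $DH(\cF_X)\cong DH(\cF_{X^{(1)}})$, and it suffices to treat $X=X^{(1)}$ linear. I would then grade $\mathfrak{X}(\K^n)=\bigoplus_{k\geq0}\mathfrak{X}_k$ by the polynomial degree of the coefficients. The operator $\mathrm{ad}_{X^{(1)}}$ preserves each $\mathfrak{X}_k$, and (after complexifying and passing to upper-triangular form) its eigenvalues on $\mathfrak{X}_k$ are $\langle m,\lambda\rangle-\lambda_j$ with $|m|=k$. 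Non-resonance makes these nonzero for all $k\geq2$, and it also forces $\lambda_j\neq0$ for every $j$ (take $m=2e_j$), so $\mathrm{ad}_{X^{(1)}}$ is invertible on $\mathfrak{X}_0$ as well. Hence $\mathrm{coker}\,\mathrm{ad}_{X^{(1)}}$ is concentrated in degree $1$ and equals $\mathrm{coker}(\mathrm{ad}_A\colon\mathfrak{gl}_n\to\mathfrak{gl}_n)=DH_{lin}(X^{(1)})$, which proves the first half of (i).

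For (ii), $\dim\mathrm{coker}(\mathrm{ad}_A)=n^2-\mathrm{rank}(\mathrm{ad}_A)=n^2-d$, because the rank of $\mathrm{ad}_A$ is precisely the dimension of the adjoint orbit of $A$; the inequality $n^2-d\geq n$ is the standard fact that the centralizer $Z(A)$ of any matrix has dimension $\geq n$. For (iii), when the eigenvalues are distinct the centralizer is exactly the diagonal matrices in an eigenbasis, of dimension $n$, so $d=n^2-n$ and $\dim DH_{lin}(X^{(1)})=n$.

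Finally, for $\dim DH(\cF_X)=\dim DH_{lin}(X^{(1)})-1$ I would analyze the extra relations $\{fX^{(1)}\mid f\in\mathcal{O}_n\}$ inside $DH(X^{(1)})$. Expanding $f=f_0+f_1+\cdots$ into homogeneous parts, $f_kX^{(1)}\in\mathfrak{X}_{k+1}$, and since the cohomology lives in degree $1$ only the term $f_0X^{(1)}$ survives; thus the image is $\K\cdot[X^{(1)}]$, of dimension at most one. The crux, which I expect to be the main obstacle, is to show this class is nonzero, i.e. $A\notin\mathrm{Im}(\mathrm{ad}_A)$. Using the trace form one has $\mathrm{Im}(\mathrm{ad}_A)^{\perp}=Z(A)$, so $A\in\mathrm{Im}(\mathrm{ad}_A)$ would force $\mathrm{tr}(A^{k})=\mathrm{tr}(A\cdot A^{k-1})=0$ for all $k\geq1$ (all powers of $A$ lie in $Z(A)$), i.e. $A$ nilpotent; but a nonzero nilpotent linear part is resonant, contradicting the hypothesis. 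Hence $[X^{(1)}]\neq0$, the image is exactly one-dimensional, and $\dim DH(\cF_X)=\dim DH(X)-1=\dim DH_{lin}(X^{(1)})-1$, completing (i).
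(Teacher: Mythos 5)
Your proof is correct and follows essentially the same route as the paper's: Poincaré--Dulac formal linearization, then a degree-by-degree analysis of $\mathrm{ad}_{X^{(1)}}$ in which non-resonance concentrates the cohomology in the linear part, and finally the linear algebra of adjoint orbits and centralizers for (ii) and (iii). You are in fact more careful than the paper on two points it glosses over: the absorption of the degree-$0$ (constant) terms of a deformation (using that non-resonance forces $\lambda_j\neq 0$, via $m=2e_j$), and the nonvanishing $X^{(1)}\notin \mathrm{Im}(\mathrm{ad}_{X^{(1)}})$, which is exactly what the ``$-1$'' in $\dim DH(\cF_X)=\dim DH_{lin}(X^{(1)})-1$ requires and which the paper's proof uses implicitly without justification; for this last point an even shorter argument is available: non-resonance forces $\mathrm{tr}(X^{(1)})\neq 0$ (if $\sum_i\lambda_i=0$ then $\lambda_j=2\lambda_j+\sum_{i\neq j}\lambda_i$ is a resonance relation), while every commutator $[Z,X^{(1)}]$ is traceless.
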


\begin{proof}
By hypothesis, the vector field $X$ is formally linearizable, we then can assume that it coincides with its linear part $X^{(1)}$.
Moreover, if $X^{(1)} + \varepsilon Y$ is an infinitesimal deformation of $X^{(1)}$ then $Y$ may be written as $Y=Y^{(1)} + \widetilde{Y}$ where $\widetilde{Y}$ contains only terms of degree larger or equal to 2. Since $X^{(1)}$ is non-resonant, we have 
$\widetilde{Y} = [X^{(1)} , \widetilde{Z}]$ for some formal vector field $\widetilde{Z}$ which contains only terms of degree larger or equal to 2. 
Finally, $DH(X)$ is the quotient of the vector space of linear vector fields by the vector space of vector fields of type $[X^{(1)} , Z]$
where $Z$ is a linear vector field (whose dimension is the dimension of the adjoint orbit of $X^{(1)}$).
In the same way, $DH(\cF_X)$ is the quotient of the vector space of linear vector fields by the vector space of vector fields of type
$[X^{(1)} , Z] + \lambda X^{(1)}$ where $Z$ is a linear vector field and $\lambda\in\K$.

Finally, recall that the dimension of the adjoint orbit of $X^{(1)}$ is less than $n(n-1)$ and if the eigenvalues of $X^{(1)}$ are distincts,
it is exactly $n(n-1)$.
\end{proof}

Note that the point {\it (iii)} of this proposition can be true even if $X^{(1)}$  has eigenvalues of multiplicity strictly larger than 1. 
It is the case if in the Jordan decomposition of $X^{(1)}$, there is only one Jordan block corresponding to each eigenvalue. 

In the resonant case, the formal deformation cohomology can be infinite-dimensional. \\

Let us now recall that there are two types
of linear Nambu structures:

\underline{Type 1}:  
$\Lambda$  is dual to a decomposable linear integrable differential form  $\omega = dx_1 \wedge \dots 
\wedge dx_{p-1} \wedge dQ,$ where $Q$ is a quadratic function.

\underline{Type 2}: $  \Lambda$ is decomposable:      
$  \Lambda = \partial / \partial x_{1} \wedge ... \wedge \partial / \partial x_{q-1} \wedge                 (\sum_{i,j=q}^{n} b^i_j x_i \partial / \partial x_j).             
$   

It has been shown in \cite{DuZu-Nambu1999, Zung-Nambu2013}
that linear Nambu structures of Type 1 with a nondegenerate quadratic function $Q$ in its formula are formally and analytically rigid (and they are also smoothly rigid if $Q$ satisfies a natural condition on its signature).  In fact, the proofs in these papers also dealt with deformation cohomology, so we can conclude that
the formal and analytic deformation cohomology of a linear Nambu structure of Type 1 is trivial if the quadratic function $Q$ in its formula is nondegenerate. If, moreover, the signature of $Q$ is different from $(1,*)$
then the local smooth deformation cohomology is also trivial. 

As regards linear Nambu structure of Type 2, the situation is similar to that of linear vector fields 
$X = \sum_{i,j=q}^{n} b^i_j x_i \partial / \partial x_j$ 
in the formula. In particular, if $X$ is non-resonant then
$  \Lambda = \partial / \partial x_{1} \wedge ... \wedge \partial / \partial x_{q-1} \wedge X$ has trivial formal deformation cohomology. 
(See \cite{DuZu-Nambu1999, Zung-Nambu2013} for
the details.)

\end{document}